\theoremstyle{plain}
\newtheorem{Th}{Theorem}[section]
\newtheorem{Lemma}[Th]{Lemma}
\newtheorem{Cor}[Th]{Corollary}
\newtheorem{Prop}[Th]{Proposition}
\theoremstyle{definition}
\newtheorem{?}[Th]{Problem}
\def\bS{{\mathbb S}}
\def\SM{{\mathbb{S} M}}
\newcommand{\sm}{\bS\!\!\!/\,\!}
\newcommand{\D}{D\!\!\!\!/\,}
\newcommand{\nb}{\nabla\!\!\!\!/\,}
\newcommand{\mult}{\gamma\!\!\!/}
\newcommand{\E}{\mathcal{E}\!\!\!/}
\begin{document}
	
	\title[Spin hypersurfaces with constant scalar curvature]{A spinorial approach to constant scalar curvature hypersurfaces in pseudo-hyperbolic manifolds}

	\author[F. Gir\~{a}o and D. Rodrigues]{Frederico Gir\~{a}o\quad Diego Rodrigues }
	
	\address{Universidade Federal do Cear\'{a}\\ Departamento de Matem\'{a}tica\\Campus do Pici\\Av. Humberto Monte, s/n, bloco 914, 60455-760\\Fortaleza/CE\\Brazil} 
	
	\email{fred@mat.ufc.br}
	\email{diego.sousa.ismart@gmail.com}
	\subjclass[2010]{{53C27}, {53C24}}
	\keywords{Pseudo-hyperbolic manifolds;Imaginary killing spinors;Alexandrov type inequality}

	\begin{abstract} Using spinorial techniques, we prove, for a class of
	%{\color{red}a class of} 
	pseudo-hyperbolic ambient manifolds, a Heintze-Karcher type inequality. We then use this inequality to show an Alexandrov type theorem in such spaces.
	%In this paper, we give a Heintze-Karcher type inequality for a class of pseudo-hyperbolic manifolds via spinorial techniques. Moreover, we use this inequality to prove an Alexandrov type theorem for hypersurfaces with constant scalar curvature for such spaces. 
	\end{abstract}
	
	\maketitle
	
	\section{Introduction}
	 The problem of classifying constant scalar curvature compact hypersurfaces in Euclidean space was proposed by Yau in  the problem section of \cite{yau}. One would like to know, for example, if Alexandrov's theorem \cite{aleksandrov}, which states that spheres are the only compact constant mean curvature hypersurfaces embedded in Euclidean space, still holds when the mean curvature is replaced by the scalar curvature. This question was answered positively by Ros in \cite{ros}, by modifying Reilly's proof of Alexandrov's theorem \cite{reilly}.
	 %Ros obtained the following result:
	%\begin{quotation}
	%	\textit{Spheres are the only compact hypersurfaces with constant scalar curvature embedded in Euclidean space.}
	%\end{quotation}
	After this, different proofs and generalizations of Ros'es result were given; for example, using the Minkowski formulae \cite{minkowski} and the Heintze-Karcher inequality \cite{HK}, Ros extended his result for any $r$-cur\-va\-ture in \cite{ros2}. This approach also works in hyperbolic space, where the same conclusion holds \cite{montiel2}. 
	
	%In \cite{montiel}, Montiel proves an Alexandrov type theorem for a large family of complete Riemannian manifolds, which can be described from warped products with a 1-dimensional factor. In particular, when the warping function is the exponential, the manifold $M^{n+1}=\mathbb{R}\times_{\rm exp}P^n$ is referred to as a \textit{pseudo-hyperbolic space} \cite{tashiro}. When $P^n$ is Ricci-flat, %the corresponding pseudo-hyperbolic space
	%$M^{n+1}$
	%is Einstein with negative Ricci curvature, and when $P^n$ is flat, $M^{n+1}$ is a hyperbolic space form. Using the previously mentioned techniques, Montiel proved an Alexandrov type theorem for compact hypersurfaces $\Sigma^n$ bounding a domaian into a pseudo-hyperbolic space of the form $\mathbb{R}\times_{\rm exp}P^n,\ n\geq2$, with $P^n$ a compact Ricci-flat manifold. It is proven that if the scalar curvature of $\Sigma^n$ is constant, then it is either a round geodesic hypersphere or a slice $\{s\}\times P^n,\ s\in\mathbb{R}$.
	
	Recall that a manifold of the form $M^{n+1}=\mathbb{R}\times_{\rm exp}P^n$, with $P$ a complete Riemannian manifold, is called a {\it pseudo-hyperbolic space} \cite{tashiro}. When $P^n$ is Ricci-flat, %the corresponding pseudo-hyperbolic space
	$M^{n+1}$
	is Einstein with negative Ricci curvature, and when $P^n$ is flat, $M^{n+1}$ is a hyperbolic space form.
	
	%In \cite{montiel}, Montiel considers, as ambient spaces, a wide family of complete Riemannian manifolds, which can be described from warped products with a 1-dimensional factor. In particular, when the warping function is the exponential, the manifold $M^{n+1}=\mathbb{R}\times_{\rm exp}P^n$ is referred to as a \textit{pseudo-hyperbolic space} \cite{tashiro}. When $P^n$ is Ricci-flat, %the corresponding pseudo-hyperbolic space
	%$M^{n+1}$
	%is Einstein with negative Ricci curvature, and when $P^n$ is flat, $M^{n+1}$ is a hyperbolic space form.
	
	In \cite{montiel}, among other results, Montiel proves the following Alexandrov type theorem: if $\Sigma$ is either a constant mean curvature or a constant scalar curvature hypersurface bounding a domain into a pseudo-hyperbolic space $\mathbb{R}\times_{\rm exp}P^n$, with $n \geq 2$ and $P^n$ a compact Ricci flat manifold, then it is either a geodesic sphere or a slice $\lbrace s \rbrace \times P^n$, $s \in \mathbb{R}$.

	%He considers a hypersurface $\Sigma$ bounding a domain into a pseudo-hyperbolic space $\mathbb{R}\times_{\rm exp}P^n$, with $n \geq 2$ and $P^n$ a compact Ricci flat manifold. He then proves that if $\Sigma$ has constant scalar curvature, then it is either a geodesic hypersphere or a slice $\lbrace s \rbrace \times P^n$, $s \in \mathbb{R}$. 
	
	The aim of this paper is to give a spinorial proof for the previous result. For this, we will prove a Heintze-Karcher inequality for spin manifolds carrying a nontrivial imaginary Killing spinor:
	
	\begin{Th}\label{main} Let $(M,g)$ be a (n+1)-dimensional connected Riemannian spin manifold carrying a nontrivial imaginary Killing spinor $\psi$ and let $\Sigma$ be a hypersurface bounding a compact domain $\Omega$ in $M$. Let $V=|\psi|^2$ and suppose the mean curvature $H$ of the hypersurface $\Sigma$ is positive everywhere. Then
		\begin{equation}\label{ineq1}
		\int_\Sigma\frac{V}{H}\,d\Sigma+\int_\Sigma\langle\nabla V,N\rangle\,d\Sigma\geq0,
		\end{equation}
	where $\nabla$ is the Levi-Civita connection of $(M,g)$ and $N$ is the inward pointing unit vector field normal to $\Sigma$. Moreover, equality holds if and only if $\Sigma$ is totally umbilical.
	\end{Th}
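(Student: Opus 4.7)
The plan is to combine a pointwise boundary identity, obtained by restricting the imaginary Killing spinor $\psi$ to $\Sigma$, with the spinorial Reilly formula and Penrose (twistor) operator bound on $\Omega$.

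\textbf{Step 1: Boundary identity.} I normalize the Killing constant so that $\nabla_X \psi = \frac{i}{2}X\cdot\psi$; then $D\psi = -\frac{i(n+1)}{2}\psi$ and $\nabla_N\psi = \frac{i}{2}N\cdot\psi$. A direct computation gives $\langle\nabla V, X\rangle = -\mathrm{Im}\,\langle X\cdot\psi, \psi\rangle$ for every tangent vector $X$, hence $\mathrm{Hess}\,V = Vg$, and applying Lichnerowicz to $\psi$ forces $R_M = -n(n+1)$. Writing $\phi := \psi|_\Sigma$ and plugging into the extrinsic Dirac formula $D^\Sigma\phi = \frac{nH}{2}\phi - N\cdot(D\psi)|_\Sigma - \nabla_N\psi|_\Sigma$ yields
\[
D^\Sigma\phi = \frac{nH}{2}\phi + \frac{in}{2}N\cdot\phi,
\]
and taking the real part of the Hermitian inner product with $\phi$ produces the key pointwise identity
\[
\mathrm{Re}\,\langle D^\Sigma\phi, \phi\rangle = \frac{n}{2}\bigl(HV + \langle\nabla V, N\rangle\bigr).
\]

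\textbf{Step 2: Reilly--Penrose setup.} The spinorial Reilly formula on $\Omega$ reads, for every spinor $\eta$,
\[
\int_\Omega \bigl(|D\eta|^2 - |\nabla\eta|^2 - \tfrac{R}{4}|\eta|^2\bigr)\,dV = \int_\Sigma \bigl(\tfrac{nH}{2}|\eta|^2 - \mathrm{Re}\,\langle D^\Sigma\eta, \eta\rangle\bigr)\,d\Sigma.
\]
Combined with the pointwise Penrose inequality $|\nabla\eta|^2 \geq \frac{1}{n+1}|D\eta|^2$ (equality iff $\eta$ is a twistor spinor), this yields a one-sided integral inequality. Naive choices such as $\eta = \psi$ or $\eta = h\psi$ merely reproduce the divergence-theorem identity $\int_\Sigma\langle\nabla V, N\rangle = -(n+1)\int_\Omega V\,dV$, because $\psi$ is itself a twistor spinor. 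The effective test spinor is instead the solution $\eta$ of the Dirac boundary value problem
\[
\tilde D\,\eta = 0 \text{ on } \Omega, \qquad P_+\,\eta|_\Sigma = P_+\bigl(H^{-1/2}\phi\bigr),
\]
where $\tilde D = D + \frac{i(n+1)}{2}$ is the twisted Dirac operator (annihilating $\psi$) and $P_\pm = \frac{1}{2}(1\pm iN\cdot)$ are the MIT-bag projections.

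\textbf{Step 3: Conclusion and equality case.} With this choice of $\eta$, one has $|\eta|^2 = V/H$ on $\Sigma$. Applying the Reilly--Penrose inequality, substituting the boundary identity from Step 1 (so that the $HV$-contributions cancel), and integrating the bulk terms by parts via $\mathrm{Hess}\,V = Vg$, the inequality reduces exactly to
\[
\int_\Sigma \frac{V}{H}\,d\Sigma + \int_\Sigma \langle\nabla V, N\rangle\,d\Sigma \geq 0.
\]
Equality forces the Penrose term to vanish, so $\eta$ is a twistor spinor on $\Omega$; combined with the Killing equation for $\psi$ and the MIT-bag boundary condition, this forces the second fundamental form of $\Sigma$ to be a multiple of the induced metric, so $\Sigma$ is totally umbilical.

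\textbf{Main obstacle.} The principal technical difficulty lies in the construction of the test spinor $\eta$: verifying ellipticity of the MIT-bag boundary condition for $\tilde D$, showing existence and regularity of the solution, and carefully tracking the integration by parts so that the bulk contributions (after applying Penrose) combine with the boundary identity to yield exactly the desired integral combination. The hypothesis of a nontrivial imaginary Killing spinor on $M$ enters critically in fixing $R_M = -n(n+1)$ and $\mathrm{Hess}\,V = Vg$, both of which are required for the cancellation in Step~3.
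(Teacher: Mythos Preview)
Your overall strategy---solve a Dirac boundary value problem on $\Omega$ and feed it into a Reilly-type identity---is precisely the mechanism behind the paper's key tool, but the paper packages it differently and your Step~3 contains a real gap. The paper does not construct any test spinor $\eta$; it quotes as a black box the holographic inequality of Hijazi--Montiel--Raulot (Theorem~\ref{holographic}): for every $\Phi\in\Gamma(\E)$,
\[
\int_\Sigma\Bigl(\frac{1}{H}\bigl|\D^+\Phi\bigr|^2-\frac{n^2}{4}H|\Phi|^2\Bigr)\,d\Sigma\geq 0,
\]
and then simply takes $\Phi=\varphi=\psi|_\Sigma$, computes $\D^+\varphi=\frac{nH}{2}\varphi+in\gamma(N)\psi$, expands $|\D^+\varphi|^2=\frac{n^2H^2}{4}V+n^2V+n^2H\,\Re\langle i\gamma(N)\psi,\psi\rangle$, and the desired inequality falls out in one line after identifying $\Re\langle i\gamma(N)\psi,\psi\rangle=\langle\nabla V,N\rangle$. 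For the equality case the paper uses the rigidity clause of Theorem~\ref{holographic} to produce an imaginary Killing spinor on $\Omega$ with constant $-i/2$ whose restriction to $\Sigma$ equals $\psi+\frac{i\gamma(N)}{H}\psi$; differentiating this along a principal direction $X_i$ and pairing with $\gamma(X_i)\psi$ yields $\lambda_i=H$ pointwise. Your Steps~1--2 are essentially an attempt to re-derive Theorem~\ref{holographic} inline rather than quote it.

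The problem is that your Step~3 does not work as written. The boundary condition fixes only $P_+\eta|_\Sigma$, so the assertion $|\eta|^2=V/H$ on $\Sigma$ is false: one has $|\eta|^2=|P_+(H^{-1/2}\phi)|^2+|P_-\eta|^2$, and $P_-\eta$ is determined by the interior PDE, not by you. Consequently the claimed cancellation of the ``$HV$-contributions'' has no content---the Reilly boundary integrand involves $H|\eta|^2$ and $\Re\langle D^\Sigma\eta,\eta\rangle$, neither of which can be identified with the quantities from your Step~1, which concern $\phi$ rather than $\eta$. In the actual HMR argument, after the Reilly--Lichnerowicz step one obtains a boundary inequality in terms of the unknown $\Psi$, and a separate algebraic step---exploiting $\D^+P_\pm=P_\mp\D^+$ together with a Cauchy--Schwarz/Young inequality---is required to convert it into the $\frac{1}{H}|\D^+\Phi|^2$ form for the \emph{prescribed} datum $\Phi$. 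That conversion step is exactly what your sketch omits, and without it the argument does not close. (A secondary issue: the paper and HMR use the chirality projections $P_\pm=\frac{1}{2}(\mathrm{Id}\pm\gamma(N)\omega)$ rather than the MIT-bag projections $\frac{1}{2}(\mathrm{Id}\pm iN\cdot)$ you wrote; these are different operators and the intertwining relation with $\D^+$ that makes the argument run holds for the former.)
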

	%{\color{red} Na desigualdade (\ref{ineq1}), escrever $\overline{\nabla}$ no lugar de $\nabla$?}
	
	Let $(M,g)$ be as in Theorem \ref{main}, that is, $(M,g)$ is a Riemannian spin manifold carrying a nontrivial imaginary Killing spinor. When $M$ is complete, Baum proved in \cite{baum} that $M$ is a warped product $\mathbb{R}\times_{\rm exp}P$, with the $n$-dimensional manifold $P$ being a complete Riemannian spin manifold admitting a nontrivial parallel spinor. Hence, by Wang's classification \cite{wang}, $P$ is a {\bf flat manifold, a Calabi-Yau manifold, a hyper-K\"{a}hler manifold or some eight- or seven-dimensional Riemanian manifolds with special holonomy}. Also, as we will see later, the function $V$ satisfies
	$$
	\Delta V = (n+1)V.
	$$
	Thus, Theorem \ref{main} can be rewritten as follows:
	
	\begin{Cor}\label{main2} Let $\Sigma$ be a connected compact hypersurface bounding a compact domain in a pseudo-hyperbolic space $M=\mathbb{R}\times_{\rm exp}P$, where $P$ is a complete Riemannian spin manifold admitting a nontrivial parallel spinor. Assume that the mean curvature $H$ of the hypersurface $\Sigma$ is positive everywhere. Then
		\begin{equation}\label{ineq2}
		\int_\Sigma\frac{V}{H}\,d\Sigma\geq (n+1)\int_\Omega V\,dvol.
		\end{equation}
		Moreover, equality holds if and only if $\Sigma$ is totally umbilical.
		%In particular, when $P$ is the Euclidean space $\mathbb{R}^n$, $M$ is isometric to the hyperbolic space $\mathbb{H}^{n+1}$, and the conclusion in the equality case can be changed to a geodesic sphere. 
	\end{Cor}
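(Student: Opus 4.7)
The plan is to derive Corollary \ref{main2} as an essentially immediate consequence of Theorem \ref{main}, once one feeds in two pieces of additional structure: the identification of $(M,g)$ as a manifold carrying an imaginary Killing spinor, and the PDE satisfied by $V$.

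First I would verify that the spinorial Heintze--Karcher inequality of Theorem \ref{main} applies in the present setting. Baum's classification (cited in the excerpt) tells us that a complete Riemannian spin manifold with a nontrivial imaginary Killing spinor is necessarily a warped product $\mathbb{R}\times_{\exp} P$ with $P$ carrying a nontrivial parallel spinor; the converse direction is also standard and explicit, producing on $M=\mathbb{R}\times_{\exp} P$ a nontrivial imaginary Killing spinor $\psi$ from any parallel spinor on $P$. Thus $(M,g)$ satisfies the hypotheses of Theorem \ref{main}, and inequality (\ref{ineq1}) applies to our $\Sigma$.

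Next I would invoke the identity $\Delta V=(n+1)V$ that the paper asserts and integrate it over $\Omega$. Applying the divergence theorem, and recalling that $N$ is the \emph{inward} unit normal so that the outward unit normal on $\partial\Omega=\Sigma$ is $-N$, I get
\begin{equation*}
(n+1)\int_\Omega V\, dvol \;=\; \int_\Omega \Delta V\, dvol \;=\; -\int_\Sigma \langle\nabla V,N\rangle\, d\Sigma.
\end{equation*}
Substituting this into (\ref{ineq1}) produces (\ref{ineq2}) at once, and the rigidity statement (equality iff $\Sigma$ is totally umbilical) is inherited directly from the corresponding statement in Theorem \ref{main}.

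The main obstacle is not in this corollary itself, which is essentially a one-line reformulation, but rather in the two ingredients it rests on: the spinorial Heintze--Karcher inequality of Theorem \ref{main} (the real content of the paper) and the eigenvalue-type identity $\Delta V=(n+1)V$ for $V=|\psi|^2$. The latter would be derived from the Killing equation $\nabla_X\psi = \tfrac{i}{2}X\cdot\psi$ by differentiating $V$ twice and using Clifford algebra identities; I would expect the factor $(n+1)$ to appear from contracting over the $(n+1)$-dimensional tangent bundle of $M$.
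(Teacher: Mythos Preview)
Your proposal is correct and matches the paper's own proof essentially step for step: the paper likewise shows $\mathrm{Hess}\,V=V\langle\,,\,\rangle$ from the Killing equation, traces to obtain $\Delta V=(n+1)V$, integrates over $\Omega$ and applies the divergence theorem (with the inward normal convention) to convert (\ref{ineq1}) into (\ref{ineq2}), and reads off the equality case directly from Theorem~\ref{main}. Your extra remark that one must first know $M=\mathbb{R}\times_{\exp}P$ actually carries an imaginary Killing spinor is a point the paper leaves implicit via Baum's result.
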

	
	An interesting special case of Corollary \ref{main2} is when $P$ is the Euclidean space $\mathbb{R}^n$, which implies that $M$ is the hyperbolic space $\mathbb{H}^n$. Note that, in this case, the conclusion ``$\Sigma$ is totally umbilical'' in the equality case can be changed to ``$\Sigma$ is a geodesic sphere''. This case follows from a very general result of Brendle for warped product ambient manifolds \cite{brendle} (see also \cite{Qiu-Xia,ww}). A spinorial proof of this special case was given by Hijazi, Montiel and Raulot in \cite{hss}.
	%When $P$ is the Euclidean space $\mathbb{R}^n$, $M$ is the hyperbolic space (with sectional curvature $-1$).
	%In \cite{hss}, Hijazi et al give a spinorial proof for (\ref{ineq2}) when the ambient is the hyperbolic space.
	They acomplish this by realizing $\mathbb{H}^{n+1}$ as a spacelike hypersurface in Minkowski space $\mathbb{R}^{n+1,1}$ and using spinorial techniques in such ambient. Our proof, in its turn, is totally implicit and it's valid for a large class of ambient spaces; one of its main ingredients is a holographic principle for the existence of imaginary killing spinors, also due to Hijazi, Montiel and Raulot \cite{hor} (see Theorem \ref{holographic}).

	%{\color{red} Como o teorema acima cont\'em muitos s\'{\i}mbolos que ainda n\~ao foram definidos, talvez seja melhor apenas cit\'a-lo, adiando a escrita do seu enunciado para outra se\c c\~ao.}
	
	As mentioned before, in \cite{montiel}, Montiel showed an Alexandrov type theorem for hypersurfaces with constant mean curvature or constant scalar curvature in some pseudo-hyperbolic spaces. Using spinorial techniques, Hijazi, Montiel and Roldan gave another proof to the constant mean curvature case \cite{hor2}. Here, we give another proof to the constant scalar curvature case; our proof is spinorial in the sense that it uses inequality (\ref{ineq1}), which was proved using spinorial techniques.

	%Here, we give a spinorial proof for the case when the scalar curvature is constant:
	
	\begin{Cor}\label{main3} Let $\Sigma$ be a connected hypersurface bounding a  domain in a pseudo-hyperbolic space $\mathbb{R}\times_{\rm exp}P$, where $P$ is a complete Riemannian spin manifold admitting a nontrivial parallel spinor. If  the scalar curvature of $\Sigma$ is constant, then it is either a round geodesic hypersphere (and, in this case P must be flat) or a slice $\{s\}\times P,\ s\in\mathbb{R}$.
	\end{Cor}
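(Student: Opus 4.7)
The plan is to combine the Heintze--Karcher inequality of Corollary \ref{main2} with the Gauss equation, the Newton--Maclaurin inequality, and the first Minkowski formula, so as to force $\Sigma$ to be totally umbilical; the conclusion then follows from a standard classification of umbilical hypersurfaces in a pseudo-hyperbolic space.

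First I would note that a non-trivial parallel spinor on $P$ forces $P$ to be Ricci-flat, and therefore the warped product $M = \mathbb{R}\times_{\exp}P$ is Einstein with $\mathrm{Ric}^M = -n\,g_M$; in particular $R^M = -n(n+1)$ and $\mathrm{Ric}^M(N,N) = -n$. Up to replacing $\Sigma$ by a slice (which already fulfils one of the alternatives in the conclusion), I would assume that $\Sigma$ bounds a compact domain $\Omega$ and that $H>0$, as required for Corollary \ref{main2}. The Gauss equation gives
\[
R^\Sigma = -n(n-1) + n^2 H^2 - |A|^2,
\]
so the hypothesis that $R^\Sigma$ is constant is equivalent to the constancy of $C := n^2 H^2 - |A|^2$ on $\Sigma$; by the Newton--Maclaurin inequality $|A|^2\geq nH^2$ one then obtains the pointwise bound $H\geq H_0 := \sqrt{C/(n(n-1))}$, with equality at a point if and only if $\Sigma$ is umbilical there.

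The main step is to combine the inequality $\int_\Sigma V/H \,d\Sigma \geq (n+1)\int_\Omega V\,dvol$ from Corollary \ref{main2} with the first Minkowski identity coming from the conformal Killing field $X := V\partial_r = \nabla V$ (which satisfies $\nabla X = V\,g_M$), namely $\int_\Sigma (V + H\langle X,N\rangle)\,d\Sigma = 0$, and with the divergence identity $\int_\Sigma \langle X,N\rangle\,d\Sigma = -(n+1)\int_\Omega V\,dvol$ that follows from $\Delta V = (n+1)V$. Rearranging these ingredients one obtains
\[
\int_\Sigma V\,d\Sigma - (n+1)H_0\int_\Omega V\,dvol = -\int_\Sigma (H-H_0)V\langle\partial_r,N\rangle\,d\Sigma,
\]
while Corollary \ref{main2} and the pointwise bound $1/H\leq 1/H_0$ yield $(n+1)H_0\int_\Omega V \leq H_0\int_\Sigma V/H\,d\Sigma \leq \int_\Sigma V\,d\Sigma$, so the left-hand side above is non-negative. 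A careful closing argument, exploiting the equality case of Corollary \ref{main2} together with the constancy of $C$, then forces $H\equiv H_0$ on $\Sigma$, and hence (by the equality case of Newton--Maclaurin) $\Sigma$ is totally umbilical.

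Finally, a connected totally umbilical hypersurface in the warped product $\mathbb{R}\times_{\exp}P$ is either a slice $\{s\}\times P$ or a geodesic sphere; the latter can occur as an umbilical hypersurface only when the ambient has constant sectional curvature, which, by the warped-product curvature formulas, forces $P$ to be flat and hence $M = \mathbb{H}^{n+1}$. The main obstacle in this plan lies in the sign analysis of the key identity above: since $\langle\partial_r,N\rangle$ is not of definite sign on a generic $\Sigma$, the vanishing of the error term $\int_\Sigma(H-H_0)V\langle\partial_r,N\rangle\,d\Sigma$ is not automatic from $(H-H_0)V\geq 0$, and must be extracted by either a delicate iterated application of Corollary \ref{main2} to subdomains determined by the sign of $\langle\partial_r,N\rangle$, or by an auxiliary integral identity (for instance obtained from Reilly's formula applied to the Killing potential $V$).
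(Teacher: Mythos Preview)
Your overall strategy mirrors the paper's: reduce constant scalar curvature to constant $H_2$ (your $H_0=\sqrt{H_2}$), combine the Heintze--Karcher inequality with a Minkowski-type identity and the Newton--Maclaurin bound $H\ge\sqrt{H_2}$ to force umbilicity, then classify. You correctly derive the inequality $\int_\Sigma V - (n+1)H_0\int_\Omega V\ge 0$ (equivalently $\sqrt{H_2}\int_\Sigma\langle\nabla V,N\rangle + \int_\Sigma V\ge 0$), and you correctly identify the obstacle: one cannot conclude from this alone that the error term vanishes, because $\langle\nabla V,N\rangle$ has no sign.

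The missing ingredient --- and it is exactly the one the paper uses --- is the \emph{second} Minkowski formula (the case $k=1$):
\[
\int_\Sigma VH\,d\Sigma + H_2\int_\Sigma\langle\nabla V,N\rangle\,d\Sigma = 0,
\]
valid here because the ambient is Einstein, so that $\langle\mathrm{div}_\Sigma T_1,X\rangle = \mathrm{Ric}(N,X)=0$ for all tangent $X$. Combining this with $H\ge\sqrt{H_2}$ gives $-H_2\int_\Sigma\langle\nabla V,N\rangle\ge \sqrt{H_2}\int_\Sigma V$; feeding in the first Minkowski identity $\int_\Sigma V = -\int_\Sigma H\langle\nabla V,N\rangle$ then yields the reverse inequality
\[
\sqrt{H_2}\int_\Sigma\langle\nabla V,N\rangle + \int_\Sigma V = \int_\Sigma(\sqrt{H_2}-H)\langle\nabla V,N\rangle\,d\Sigma \le 0.
\]
Equality is now forced on both sides, hence $H\equiv\sqrt{H_2}$ and $\Sigma$ is umbilical. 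Your proposed fixes (iterating Corollary~\ref{main2} on subdomains, or Reilly's formula for $V$) are not what closes the argument; the $k=1$ Minkowski identity does it in one line, and it is available precisely because you already observed $\mathrm{Ric}^M(N,\cdot)=-n\,g(N,\cdot)$.

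A secondary gap: you assume without argument that $\Sigma$ bounds a compact domain and that $H>0$. The paper obtains $H_2\ge 1$ (hence $H\ge 1>0$) by comparing $\Sigma$ with a slice at the maximum of the height function, and handles separately the case where $\Sigma$ is compact but bounds no domain by applying the maximum principle for the elliptic operator $L_1$ to $e^h$, forcing $\Sigma$ to be a slice. Finally, a small inaccuracy: $\nabla V$ need not equal $V\partial_r$ for a general imaginary Killing spinor; only $\mathrm{Hess}\,V = Vg$ is used, and that is all you need.
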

	\section{Preliminaires}
	In this section, we recall some definitions and properties of the spin geometry of hypersurfaces embedded in a spin manifold, as it is done in \cite{hor}. 
	
	Let $(M,\langle\ ,\ \rangle)$ be a $(n+1)$-dimensional Riemannian spin manifold. We fix a spin structure and let $\mathbb{S}M$ denote the corresponding spinor bundle. We denote by $\overline{\nabla}$ both the Levi-Civita connection of $(M,\langle\ ,\ \rangle)$ and its lift to $\mathbb{S}M$, and by $\overline{\gamma}:\mathbb{C}\ell(M)\to{\rm End}_\mathbb{C}(\mathbb{S}M)$ the Clifford multiplication.
	%The map $\overline{\gamma}:\mathbb{C}\ell(M)\to{\rm End}_\mathbb{C}(\mathbb{S}M)$ is the Clifford multiplication and  $\mathbb{C}\ell(M)$ is the bundle of complex Clifford algebras assoated with the tangent spaces.
	On the spinor bundle $\mathbb{S}M$ there exists a natural Hermitian structure (see \cite{lawson}) denoted, as the Riemannian metric on $M$, by $\langle\ ,\ \rangle$. The spinorial Levi-Civita connection and the Hermitian product are compatible with the Clifford multiplication and compatible with each other; that is,
	for any $X,Y\in\Gamma(TM)$ and any $\psi,\varphi\in\Gamma(\mathbb{S}M)$ the following identities hold:
	\begin{align}
	X\langle\psi,\varphi\rangle&=\langle\overline{\nabla}_X\psi,\varphi\rangle+\langle\psi,\overline{\nabla}_X\varphi\rangle\label{c1};\\
	\langle\overline{\gamma}(X)\psi,\varphi\rangle&=-\langle\psi,\overline{\gamma}(X)\varphi\rangle\label{c2};\\
	\overline{\nabla}_X\left(\overline{\gamma}(Y)\psi\right)&=\overline{\gamma}(\overline{\nabla}_XY)\psi+\overline{\gamma}(Y)\overline{\nabla}_X\psi\label{c3}.
	\end{align}
	Also, the Dirac operator $\overline{D}$ on $\mathbb{S}M$ is locally given by
	\begin{equation}
	\overline{D}=\sum_{i=1}^{n+1}\overline{\gamma}(e_i)\overline{\nabla}_{e_i},
	\end{equation}
	where $\{e_1,\ldots,e_{n+1}\}$ is a local orthonormal frame of $TM$.
	
	Consider an orientable hypersurface $\Sigma$ immersed into $M$. The Riemannian metric on $M$ induces a Riemannian metric on $\Sigma$, also denoted by $\langle\ ,\ \rangle$, whose Levi-Civita connection $\nabla^\Sigma$ satifies the Riemannian Gauss formula
	\begin{equation}\label{gauss}
	\nabla^\Sigma_XY=\overline{\nabla}_XY-\langle A(X),Y\rangle N,
	\end{equation}
	where $X,Y$ are vector fields tangent to the hypersurface $\Sigma$, the vector field $N$ 
	is the inward pointing unit vector field normal to 
	%is a global unit inner field normal to 
	$\Sigma$, and $A$ is the shape operator with respect to $N$, that is,
	$$\overline{\nabla}_XN=-AX,\quad\forall X\in\Gamma(T\Sigma).$$	
	
	Since the normal bundle of $\Sigma$ is trivial, the hypersurface $\Sigma$ inherits a spin structure from the one of the ambient manifold $M$. Thus, $\Sigma$ has a Hermitian spinor bundle $\mathbb{S}\Sigma$ in the sense of \cite{lawson}, i.e., a \textit{Dirac bundle}. We will denote by $\gamma^\Sigma$ and $D^\Sigma$, respectively, the Clifford multiplication and the intrisic Dirac operator on $\Sigma$. We call such spinor bundle by \textit{intrisic} spinor bundle. We compare the intrisic spinor bundle $\mathbb{S}\Sigma$ to the restriction $\sm\Sigma=\SM_{|\Sigma}$. This bundle is isomorphic to either $\bS\Sigma$ or $\bS\Sigma\oplus\bS\Sigma$ acording to the dimension $n$ of $\Sigma$ is either even or odd (\cite{bar,morel}). Since the $n$-dimensional Clifford algebra is the even part of the $(n+1)$-dimensional Clifford algebra, the Clifford multiplication $\mult:\mathbb{C}\ell(\Sigma)\to{\rm End}_\mathbb{C}(\sm\Sigma)$ is given by
	\begin{equation}\label{cliff}
	\mult(X)\psi=\overline{\gamma}(X)\overline{\gamma}(N)\psi,
	\end{equation}
	for any $\psi\in\Gamma(\bS\Sigma)$ and any $X\in\Gamma(T\Sigma)$. Consider on $\sm\Sigma$ the Hermitian metric $\langle\ ,\ \rangle$ induced from that of $\SM$. This metric satisfies te compatibilty condition (\ref{c2}) if one considers on $\Sigma$ the Riemannian metric induced from $M$ and the Clifford  multicplication $\mult$ defined by (\ref{cliff}). The Gauss formula (\ref{gauss}) implies that the spin connection $\nb$ on $\sm\Sigma$ is given by the following spinorial Gauss formula
	\begin{equation}\label{nb}
	\nb_X\psi=\overline{\nabla}_X\psi-\frac{1}{2}\mult(AX)\psi
	\end{equation}
	for any $\psi\in\Gamma(\sm\Sigma)$ and any $X\in\Gamma(T\Sigma)$. Observe that the compatibity conditions (\ref{c1}), (\ref{c2}) and (\ref{c3}) are satisfied by $(\sm\Sigma,\mult,\langle\ ,\ \rangle,\nb)$.
	
	The \textit{extrinsic} Dirac operator $\D=\mult\circ\nb$ on $\Sigma$ defines a first order elliptic operator acting on sections of $\bS\Sigma$. By (\ref{nb}), for any spinor field $\psi\in\Gamma(\bS\Sigma)$ we have
	
	\begin{equation}\label{extdirac}
	\D\psi=\sum_{i=1}^n\mult(e_i)\nb_{e_i}\psi=\frac{n}{2}H\psi-\overline{\gamma}(N)\sum_{i=1}^n\overline{\gamma}(e_i)\overline{\nabla}_{e_i}\psi,
	\end{equation}
	and
	\begin{equation}
	\D(\overline{\gamma}(N)\psi)=-\overline{\gamma}(N)\D\psi,
	\end{equation}
	where $\{e_1,\ldots,e_n\}$ is a local orthonormal frame of $T\Sigma$ and $H=\frac{1}{n}\textrm{trace}A$ is the mean curvature of $\Sigma$ in $M$. Thus, we have on $\Sigma$ an intrinsic spinorial structure $(\bS\Sigma,\nabla^\Sigma,\gamma^\Sigma,D^\Sigma)$ and an extrinsic structure $(\sm\Sigma,\nb,\mult,\D)$. The dimension of $\Sigma$ plays an important role in the isomorphism of such structures. In fact, if $n$ is even, then
	\begin{equation}\label{restrict_even}
     (\sm\Sigma,\nb,\mult,\D)\equiv(\bS\Sigma,\nabla^\Sigma,\gamma^\Sigma,D^\Sigma)
	\end{equation}
and, if $n$ is odd, then
\begin{equation}\label{restrict_odd}
(\sm\Sigma,\nb,\mult,\D)\equiv(\bS\Sigma\oplus\bS\Sigma,\nabla^\Sigma\oplus\nabla^\Sigma,\gamma^\Sigma\oplus-\gamma^\Sigma,D^\Sigma\oplus-D^\Sigma).
\end{equation}	

Now, we recall the definition of a \textit{chirality operator}. A chiralty operator $\omega$ on a Dirac bundle $(\mathcal{E}M,\gamma,\nabla,\langle\ ,\ \rangle)$ is an endomorphism $\omega:\Gamma(\mathcal{E}M)\to\Gamma(\mathcal{E}M)$ such that
\begin{align}
    \omega^2=\textrm{Id}_{\mathcal{E}M},&\qquad \langle\omega\psi,\omega\varphi\rangle=\langle\psi,\varphi\rangle,\label{chi1}\\
    \omega(\gamma(X)\psi)=-\gamma(X)\omega\psi,&\qquad\nabla_X(\omega\psi)=\omega(\nabla_X\psi),\label{chi2}
\end{align}
for any $X\in\Gamma(TM)$ and any $\psi,\varphi\in\Gamma(\mathcal{E}M)$.	

Now, we set up a new Dirac bundle with a chirality operator. Consider the vector bundle
$$\mathcal{E}M:=\left\{
\begin{array}{ll}
\SM&\textrm{ if $n+1$ is even},\\
\SM\oplus\SM&\textrm{ if $n+1$ is odd,}
\end{array}
\right.$$
equipped with a Clifford multiplication $\gamma$	defined by
$$\gamma=\left\{
\begin{array}{ll}
\overline{\gamma}&\textrm{ if $n+1$ is even},\\
\overline{\gamma}\oplus-\overline{\gamma}&\textrm{ if $n+1$ is odd,}
\end{array}
\right.$$	
and a Levi-Civita connection
$$\nabla=\left\{
\begin{array}{ll}
\overline{\nabla}&\textrm{ if $n+1$ is even},\\
\overline{\nabla}\oplus\overline{\nabla}&\textrm{ if $n+1$ is odd}.
\end{array}
\right.$$	
Futhermore, $\langle\ ,\ \rangle$ denotes the Hermitian scalar product given by $\langle\ ,\ \rangle_M$ for $n$ odd and by
$$\langle\Psi,\Phi\rangle:=\langle\psi_1,\varphi_1\rangle_M+\langle\psi_2,\varphi_2\rangle_M$$
for $n$ even, for any $\Psi=(\psi_1,\psi_2)$, $\Phi=(\varphi_1,\varphi_2)\in\Gamma(\mathcal{E}M)$.

It is straightforward to verify that $(\mathcal{E}M,\nabla,\gamma)$ is a Dirac bundle in the sense of \cite{lawson}. The Dirac-type operator acting on sections of $\mathcal{E}M$ and defined by $D:=\gamma\circ\nabla$ is explicity given by
$$D=\left\{
\begin{array}{ll}
     \overline{D}&\textrm{ if $n+1$ is even,}  \\
     \overline{D}\oplus-\overline{D}&\textrm{ if $n+1$ is odd.} 
\end{array}
\right.$$

As it is done in \cite{hor}, let us examine this bundle and its restriction $(\E,\nb,\mult)$ to $\Sigma$.

If $n+1$ is even, the operator $\omega:=\gamma(\omega_{n+1}^\mathbb{C})$ defines a chirality operator on $\SM$, where $\omega_{n+1}^\mathbb{C}=i^{\left[\frac{n+2}{2}\right]}e_1\cdot\ldots\cdot e_{n+1}$ is the complex volume element. Moreover, the spinor bundle splits into
$$\mathcal{E}M=\SM=\bS^+M\oplus\bS^-M,$$
where $\bS^\pm M$ are the   $\pm1$-eigenspaces of the endomorphism $\omega$. On the other hand, the restricted spinor bundle
$$\E:=\mathcal{E}M_{|\Sigma}=\SM_{|\Sigma}=\sm\Sigma$$
can be identified  with the intrinsic data of $\Sigma$ as in (\ref{restrict_odd}).

If $(n+1)$ is odd, $\mathcal{E}M=\SM\oplus\SM$ and the map
$$
\begin{array}{rcl}
\omega:\Gamma(\mathcal{E}M)&\longrightarrow&\Gamma(\mathcal{E}M)\\
\begin{pmatrix}
\psi_1\\\psi_2
\end{pmatrix}&\longmapsto&\begin{pmatrix}
\psi_2\\\psi_1
\end{pmatrix},
\end{array}
$$
satisfies the properties (\ref{chi1}) and (\ref{chi2}), so that it defines a chirality operator on $\mathcal{E}M$. The restriction of $\mathcal{E}M$ to $\Sigma$ is given by
$$\E:=\mathcal{E}M_{|\Sigma}=\sm\Sigma\oplus\sm\Sigma$$
and can be identified with to copies of the intrisic spinor bundle of $\Sigma$ as in (\ref{restrict_even}).

The extrinsic Dirac operator acting on sections of $\E$ is defined by $\D:=\mult\circ\nb$.
%We finish this section with the definition of the following
We define the modified Dirac-type operators on $\mathcal{E}M$ and $\E$, respectively, by
\begin{equation}\label{modified_dirac_M}
    D^{\pm}:=D\mp\frac{n+1}{2}i\textrm{ Id}_{\mathcal{E}M}
\end{equation}
and
\begin{equation}\label{dirac+-}
    \D^\pm:=\D\pm\frac{n}{2}i\gamma(N)\textrm{ Id}_{\E}.
\end{equation}
If $M$ admits a Killing imaginary spinor field $\psi_\pm\in\Gamma(\mathcal{E}M)$ with Killing number $\pm\frac{i}{2}$ , i.e., $$\nabla_X\psi_{\pm}=\pm\frac{i}{2}\gamma(X)\psi_\pm,$$
for any $X\in\Gamma(TM),$ we can show that
\begin{equation}
    D^\mp\psi_\pm=0     \quad\textrm{and}\quad \D^\mp\psi_\pm=\frac{nH}{2}\psi_\pm.
\end{equation}
The previous discussion can be summarized in the following proposition (see \cite{hor}):
\begin{Prop}[\cite{hor}] The bundle $(\mathcal{E}M,\gamma,\nabla)$ is a Dirac bundle equipped with a chirality operator $\omega$ whose associated Dirac-type operator $D:=\gamma\circ\nabla$ is a first order elliptic differential operator. The restricted triplet $(\E,\mult,\nb)$ is also a Dirac bundle for which the spinorial Gauss formula
\begin{equation}
    \nb_X\psi=\nabla_X\psi-\frac{1}{2}\mult(AX)\psi
\end{equation}
holds for all $\psi\in\Gamma(\E)$ and $X\in\Gamma(T\Sigma)$, and such that
\begin{equation}
    \D\psi=\frac{n}{2}H\psi-\gamma(N)D\psi-\nabla_N\psi
\end{equation}
and
\begin{equation}
    \D(\gamma(N)\psi)=-\gamma(N)\D\psi,
\end{equation}
where $\D:=\mult\circ\nb$ is the extrinsic Dirac-type operator on $\E$. Moreover, the Dirac-type operators $$\D^\pm:=\D\pm\frac{n}{2}i\gamma(N)\textrm{Id}_{\E}$$ are first order differential operators which only depend on the Riemannian and spin structures of $\Sigma$.
\end{Prop}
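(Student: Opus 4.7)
The plan is to verify each assertion of the proposition by a parity case analysis on $n+1$, relying on the constructions already laid out in Section~2. The underlying observation is that every claim is either a componentwise lift of a fact that already holds on $\SM$ and $\sm\Sigma$, or a direct algebraic consequence of the spinorial Gauss formula (\ref{nb}) together with $\mult(X)=\overline{\gamma}(X)\overline{\gamma}(N)$. Thus the work consists mostly of careful bookkeeping and of checking that the direct-sum structure appearing when $n+1$ is odd interacts correctly with the Clifford multiplication and with the chirality operator.

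First, that $(\mathcal{E}M,\gamma,\nabla)$ is a Dirac bundle is classical when $n+1$ is even (it is $\SM$ itself); when $n+1$ is odd, the compatibilities (\ref{c1})--(\ref{c3}) for $\SM\oplus\SM$ with Clifford action $\overline{\gamma}\oplus(-\overline{\gamma})$ and connection $\overline{\nabla}\oplus\overline{\nabla}$ follow componentwise, the sign flip on the second summand being harmless because these axioms are read off each component separately. For the chirality operator, when $n+1$ is even one checks that $\omega=\gamma(\omega_{n+1}^{\mathbb{C}})$ squares to the identity, is $\nabla$-parallel, and anticommutes with $\gamma$ because of the parity of the volume element; when $n+1$ is odd, a direct verification shows the swap map $(\psi_1,\psi_2)\mapsto(\psi_2,\psi_1)$ satisfies (\ref{chi1})--(\ref{chi2}), the anticommutation with $\gamma$ being exactly the sign mismatch between the two summands. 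Ellipticity of $D$ is inherited from that of $\overline{D}$. The analogous claims for the restriction $(\E,\mult,\nb)$ reduce to restricting the preceding structures to $\Sigma$ and invoking (\ref{cliff})--(\ref{nb}) componentwise, and the spinorial Gauss formula stated in the proposition is then just the direct-sum extension of (\ref{nb}).

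For the two identities involving $\D$, I would start from the componentwise version of (\ref{extdirac}), split the ambient Dirac sum as $D\psi=\gamma(N)\nabla_N\psi+\sum_{i=1}^n\gamma(e_i)\nabla_{e_i}\psi$, solve for the tangential partial sum, and substitute; two applications of $\gamma(N)^2=-1$ produce $\D\psi=\frac{n}{2}H\psi-\gamma(N)D\psi-\nabla_N\psi$. The anticommutation $\D(\gamma(N)\psi)=-\gamma(N)\D\psi$ then follows from $\mult(X)\gamma(N)=-\gamma(N)\mult(X)$, valid because $\gamma(N)$ anticommutes with $\gamma(X)$ whenever $X\perp N$, combined with the fact that $\gamma(N)$ commutes with $\nb_X$ along $\Sigma$ --- a short computation using $\overline{\nabla}_XN=-AX$ together with the Gauss formula.

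The claim requiring the most care, and the one I expect to be the main obstacle, is the last assertion: that $\D^\pm$ depends only on the Riemannian and spin structure of $\Sigma$. Here I would use the identification isomorphisms (\ref{restrict_even}) and (\ref{restrict_odd}) to express $(\E,\mult,\nb,\D)$ in terms of the intrinsic data $(\bS\Sigma,\gamma^\Sigma,\nabla^\Sigma,D^\Sigma)$ (or two suitably twisted copies), under which $\D$ is recognized as an intrinsic operator on $\Sigma$. The nontrivial point is that $\gamma(N)$ also corresponds to an intrinsic endomorphism of $\bS\Sigma$: in a local frame with $e_{n+1}=N$, the complex volume element $\omega_{n+1}^{\mathbb{C}}$ expresses $\gamma(N)$ as a constant multiple of $\gamma^\Sigma(\omega_n^{\mathbb{C}})$, which is the intrinsic chirality on $\bS\Sigma$. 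Hence the zero-order term $\frac{n}{2}i\gamma(N)$ is intrinsic, and the parity bookkeeping --- matching sign conventions between the two dimension cases and checking that the sign twist in (\ref{restrict_odd}) does not smuggle back extrinsic data through the identification --- is the step demanding the most attention. First-order differentiability of $\D^\pm$ is then immediate from their defining formulas.
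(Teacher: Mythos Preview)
Your proposal is correct and, in fact, more detailed than what the paper itself provides. The paper does not give a proof of this proposition at all: it is stated as a summary of the constructions laid out in the paragraphs preceding it, with the citation \cite{hor} standing in for the details. Thus your case-by-case verification --- checking the Dirac-bundle axioms and chirality properties componentwise, deriving the $\D$-formula from (\ref{extdirac}) by isolating the tangential part of $D$, and tracking $\gamma(N)$ through the identifications (\ref{restrict_even})--(\ref{restrict_odd}) to argue intrinsicness of $\D^\pm$ --- is exactly the kind of bookkeeping the paper defers to \cite{hor}, and your outline is consistent with the setup the paper records before stating the proposition.
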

Now, consider the operator
$$G:=\gamma(N)\omega: \Gamma(\E)\to\Gamma(\E).$$ 
This endomorphism is a self-ajoint involution with respect to the pointwise Hermitian scalar product $\langle\ ,\ \rangle$, where $\omega$ is the chirality operator on $\mathcal{E}M$. 
%This endomorphism 
It induces an orthogonal decomposition of $\E$:
\begin{equation}
    \E=\mathcal{V}^+\oplus\mathcal{V}^-,
\end{equation}
where $\mathcal{V}^\pm$ are eigeinsubbundles over $\Sigma$ corresponding to the $\pm1$-eigenvalues of $G$. Thus, we define the associated projections on $\mathcal{V}^\pm$:
\begin{equation}
\begin{array}{rcl}
P_\pm:L^2(\mathcal{E}M)&\longrightarrow&L^2(\mathcal{V}^\pm)\\
\psi&\longmapsto&P_\pm\psi:=\frac{1}{2}(\textrm{Id}_{\mathcal{E}M}\pm\gamma(N)\omega)\psi,
\end{array}
\end{equation}
where $L^2(\mathcal{E}M)$ and $L^2(\mathcal{V}^\pm)$ denote, respectively, the spaces of $L^2$-integrable sections of $\mathcal{E}M$ and $\mathcal{V}^\pm$. The projections $P_\pm$ are orthogonal to each other and are self-adjoint with respect to the pointwise Hermitian scalar product $\langle\ ,\  \rangle$. Also, we can check that
\begin{equation}
    \D^+P_\pm=P_\mp\D^+.
\end{equation}

We end this section by stating the following result, due to Hijazi, Montiel and Raulot, which will be a key ingredient in the proof of Theorem \ref{main}.
%In order to prove Theorem \ref{main} we will use the following result, due to Hijazi, Montiel and Raulot \cite{hor}:
	\begin{Th} \label{holographic} Let $\Omega$ be a compact, connected Riemannian spin manifold with smooth boundary $\Sigma$. Assume that the scalar curvature of $\Omega$ satisfies $R\geq -n(n+1)k^2$ for some $k>0$ and the mean curvature $H$ of $\Sigma$ is positive. Then for all $\Phi\in\Gamma(\E)$, one has
		\begin{equation}\label{ineq}
		\int_\Sigma\left(\frac{1}{H}|\D^+\Phi|^2-\frac{n^2}{4}H|\Phi|^2\right)\,d\Sigma\geq0.
		\end{equation}
		Moreovoer, equality occurs for $\Phi\in\Gamma(\E)$ if and only if there exists two imaginary Killing spinor fields $\Psi^+,\Psi^-\in\Gamma(\E)$ with Killing number $-(i/2)$ such that $\mathcal{P}_+\Psi^+=\mathcal{P}_+\Phi$ and $\mathcal{P}_-\Psi^-=\mathcal{P}_-\Phi$.
	\end{Th}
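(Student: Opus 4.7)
The plan is to reduce (\ref{ineq}) to a Reilly-type identity on $\Omega$ obtained from an integrated Schr\"odinger--Lichnerowicz formula for the modified Dirac operator $D^+$ applied to a chirality-adapted harmonic extension of $\Phi$. The commutation $\D^+P_\pm=P_\mp\D^+$ yields the orthogonal splittings $|\Phi|^2=|P_+\Phi|^2+|P_-\Phi|^2$ and $|\D^+\Phi|^2=|\D^+P_+\Phi|^2+|\D^+P_-\Phi|^2$; hence it suffices to prove (\ref{ineq}) separately for $P_+\Phi\in\Gamma(\mathcal{V}^+)$ and $P_-\Phi\in\Gamma(\mathcal{V}^-)$. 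I describe the $\mathcal{V}^+$ case; the $\mathcal{V}^-$ case is analogous with the roles of $P_\pm$ swapped.

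Given $\Phi\in\Gamma(\E)$, the first step is to solve the boundary value problem
\[
D^+\Psi=0 \text{ in } \Omega,\qquad P_+\Psi|_\Sigma=P_+\Phi \text{ on } \Sigma.
\]
The chirality projection $P_+$ defines a local elliptic boundary condition of Gibbons--Hawking--Horowitz--Perry type, and the modified Schr\"odinger--Lichnerowicz identity
\[
D^-D^+=(\nabla^+)^*\nabla^+ + \tfrac{1}{4}\bigl(R+n(n+1)k^2\bigr),
\]
with $\nabla^+_X:=\nabla_X+\tfrac{i}{2}k\gamma(X)$ the Killing connection whose parallel sections are imaginary Killing spinors of Killing number $-ik/2$, combined with the hypothesis $R\geq -n(n+1)k^2$, provides the coercivity needed for unique solvability.

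The second step is to integrate the Lichnerowicz identity against $\Psi$ and apply the pointwise Penrose decomposition $|\nabla^+\Psi|^2=|\mathcal{T}\Psi|^2+\tfrac{1}{n+1}|D^+\Psi|^2$, where $\mathcal{T}_X\Psi:=\nabla^+_X\Psi+\tfrac{1}{n+1}\gamma(X)D^+\Psi$ is the associated twistor operator. With $D^+\Psi=0$, the bulk integrand collapses to the non-negative quantity $|\mathcal{T}\Psi|^2+\tfrac{1}{4}(R+n(n+1)k^2)|\Psi|^2$. Careful sign tracking in the Green's identities with the inward normal $N$, together with the formula $\nabla^+_N\Psi=\tfrac{n}{2}H\Psi-\D^+\Psi$ (a consequence of the preceding Proposition when $D^+\Psi=0$), converts the boundary contribution into the Friedrich-type lower bound $\int_\Sigma\mathrm{Re}\langle\Psi,\D^+\Psi\rangle\,d\Sigma\geq\tfrac{n}{2}\int_\Sigma H|\Psi|^2\,d\Sigma$. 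Writing $\Psi|_\Sigma=P_+\Phi+\eta$ with $\eta:=P_-\Psi|_\Sigma$ and using $\D^+P_\pm=P_\mp\D^+$ with self-adjointness of $\D^+$ reduces the left-hand side to $2\,\mathrm{Re}\langle\D^+P_+\Phi,\eta\rangle$. Applying Young's inequality $2\,\mathrm{Re}\langle\D^+P_+\Phi,\eta\rangle\leq\tfrac{2}{nH}|\D^+P_+\Phi|^2+\tfrac{nH}{2}|\eta|^2$, and cancelling the $|\eta|^2$ term on both sides, yields
\[
\tfrac{n^2}{4}\int_\Sigma H|P_+\Phi|^2\, d\Sigma\leq\int_\Sigma\tfrac{1}{H}|\D^+P_+\Phi|^2\, d\Sigma,
\]
which is (\ref{ineq}) restricted to $\mathcal{V}^+$. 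Summing with the symmetric inequality on $\mathcal{V}^-$ obtained from the dual BVP proves (\ref{ineq}). Equality forces $\mathcal{T}\Psi\equiv 0$ and $(R+n(n+1)k^2)|\Psi|^2\equiv 0$, hence $\nabla^+\Psi\equiv 0$: $\Psi$ is an imaginary Killing spinor of Killing number $-ik/2$. From the first BVP this is the required $\Psi^+$ (with $P_+\Psi^+|_\Sigma=P_+\Phi$), and from the dual BVP one obtains $\Psi^-$ (with $P_-\Psi^-|_\Sigma=P_-\Phi$); tightness of Young's inequality together with $\D^+\Psi^\pm=\tfrac{nH}{2}\Psi^\pm$ guarantees these spinors consistently reconstruct the matching data.

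The main technical obstacle is the sign bookkeeping in the Green's identities with the inward-normal convention: the derived Friedrich-type inequality goes in the direction compatible with the scalar-curvature hypothesis only after careful tracking, and the opposite normal orientation would reverse it and make the argument collapse. A secondary, more classical, difficulty is proving well-posedness of the chirality BVP for the modified operator under the stated hypotheses, which requires combining elliptic boundary theory for Dirac operators with chirality conditions and the coercivity supplied by the modified Lichnerowicz formula.
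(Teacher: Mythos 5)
This theorem is not proved in the paper at all: it is quoted verbatim from Hijazi--Montiel--Raulot \cite{hor} as an imported ingredient, so there is no in-paper argument to compare against. Your reconstruction is, in substance, the proof given in that reference: solve the chirality boundary value problem $D^+\Psi=0$ in $\Omega$ with $P_\pm\Psi|_\Sigma=P_\pm\Phi$, integrate the modified Schr\"odinger--Lichnerowicz identity with the twistor decomposition to obtain the boundary estimate $\int_\Sigma\mathrm{Re}\langle\D^+\Psi,\Psi\rangle\,d\Sigma\geq\tfrac{n}{2}\int_\Sigma H|\Psi|^2\,d\Sigma$, and close with Young's inequality; the splitting via $\D^+P_\pm=P_\mp\D^+$ and the cancellation of the $|\eta|^2$ terms are exactly the mechanism used there, and your sign bookkeeping with the inward normal comes out in the right direction. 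Two points remain only asserted rather than proved: (i) the unique solvability of the chirality BVP for the modified operator $D^+$ under $R\geq-n(n+1)k^2$ and $H>0$, which in \cite{hor} is itself a substantive step (ellipticity of the chirality condition plus the coercivity extracted from the same integrated Lichnerowicz formula and the $H>0$ boundary term); and (ii) the ``if'' direction of the equality characterization, namely that the existence of Killing spinors $\Psi^\pm$ with $P_\pm\Psi^\pm=P_\pm\Phi$ forces equality in \eqref{ineq}, which needs the direct computation $\D^+\Psi^\pm=\tfrac{nH}{2}\Psi^\pm$ and is not a consequence of your ``only if'' argument. Neither gap undermines the strategy, but both would have to be filled for a complete proof.
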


\section{Proof of Theorem~\ref{main}}
	
	In this section we present the proof of Theorem~\ref{main}.
	
	\begin{proof} Assume $H>0$ on $\Sigma$. Let $\psi$ a imaginary Killing spinor field with Killing number $i/2$ on $\Omega$, such that $V=|\psi|^2$ (See \cite{baum}).  We take the spinor field $\varphi=\psi|_\Sigma$ on $\Sigma$; for such $\varphi$, we have
		\begin{eqnarray*}
			\D\varphi&=&\frac{nH}{2}\psi-\gamma(N)\sum_{i=1}^n\gamma(e_i)\nabla_{e_i}\psi\\
			&=&\frac{nH}{2}\varphi+\frac{in}{2}\gamma(N)\psi,
		\end{eqnarray*}
		and
		\begin{equation*}
		\D^+\varphi=\frac{nH}{2}\varphi+in\gamma(N)\psi.
		\end{equation*}
		Thus,
		$$|\D^+\varphi|^2=\frac{n^2H^2}{4}|\varphi|^2+n^2|\psi|^2+n^2H\Re\langle i\gamma(N)\psi,\psi\rangle.$$
		Hence, we get
		$$\int_\Sigma\frac{1}{H}|\D^+\varphi|^2\,d\Sigma=\int_\Sigma\frac{n^2H}{4}|\varphi|^2\,d\Sigma+n^2\left(\int_\Sigma\frac{|\psi|^2}{H}\,d\Sigma+\int_\Sigma\Re\langle i\gamma(N)\psi,\psi\rangle\,d\Sigma\right).$$
		Now, we apply (\ref{ineq}) to obtain
		\begin{equation}\label{ineqspin}
		\int_\Sigma\frac{|\psi|^2}{H}\,d\Sigma+\int_\Sigma\Re\langle i\gamma(N)\psi,\psi\rangle\,d\Sigma\geq0.
		\end{equation}
		By other hand, we have that
			\begin{equation}\label{lemma}
			\langle\nabla V,N\rangle=\Re\langle i\gamma(N)\psi,\psi\rangle.
			\end{equation}
			Indeed, using the fact $\nabla_X\psi=\frac{i}{2}\gamma(X)\psi$, for all $X\in\Gamma(TM)$, we get
			\begin{eqnarray*}
				\langle\nabla V,N\rangle&=&N|\psi|^2\\
				&=&\langle\nabla_N\psi,\psi\rangle+\langle\psi,\nabla_N\psi\rangle\\
				&=&2\Re\langle\nabla_N\psi,\psi\rangle\\
				&=&\Re\langle i\gamma(N)\psi,\psi\rangle.
			\end{eqnarray*}

		Thus, substituing (\ref{lemma}) in (\ref{ineqspin}) and remembering that $V=|\psi|^2$, we obtain
		$$\int_\Sigma\frac{V}{H}\,d\Sigma+\int_\Sigma\langle\nabla V,N\rangle\,d\Sigma\geq0.$$
		
		The equality holds if and only if we have equality in (\ref{ineq}). In such case, there exists two imaginary Killing spinor fields $\Psi^+,\Psi^-\in\Gamma(\E)$ with Killing number $-(i/2)$ such that $\mathcal{P}_+\Psi^+=\mathcal{P}_+\varphi$ and $\mathcal{P}_-\Psi^-=\mathcal{P}_-\varphi$ on $\Sigma$. Then, $\varphi=\mathcal{P}_+\Psi^++\mathcal{P}_{-}\Psi^-$, then
		\begin{eqnarray*}
			\D^+\varphi&=&\D^+(\mathcal{P}_+\Psi^+)+\D^+(\mathcal{P}_{-}\Psi^-)\\
			&=&\mathcal{P}_{-}(\D^+\Psi^+)+\mathcal{P}_+(\D^+\Psi^-)\\
			&=&\frac{nH}{2}(\mathcal{P}_{-}\Psi^++\mathcal{P}_+\Psi^-).
		\end{eqnarray*}
		We deduce that
		$$\frac{2}{nH}\D^+\varphi+\varphi=\Psi^++\Psi^-=\widetilde{\Psi},$$
		that is
		$$\frac{2}{nH}\left(\frac{nH}{2}\psi+in\gamma(N)\psi\right)+\psi=\widetilde{\Psi},$$
		then
		$$\psi+\frac{i\gamma(N)}{H}\psi =\frac{1}{2}\widetilde{\Psi}.$$
		The spinor field $\widetilde{\Psi}$ is imaginary Killing since $\Psi^+$ and $\Psi^-$ are, moreover $\widetilde{\Psi}$ has Killing number $-i/2$, so the spinor field
		$$\psi+\frac{i\gamma(N)}{H}\psi$$
		is a restrition of a imaginary Killing spinor field with Killing number $-i/2$, therefore, for all $X\in\Gamma(T\Sigma)$:
		$$H\gamma(X)\psi-\gamma(AX)\psi-\frac{1}{H}X(H)\gamma(N)\psi=0.$$
		Now we choose $X=X_i\in\Gamma(\Sigma)$, where $X_i$ is a direction of principal curvatures of $\Sigma$, whose associated principal curvature is $\lambda_i$. Taking the scalar product of the last equality  with $\gamma(X_i)\psi$, we get
		$$H|X_i|^2|\psi|^2-\lambda_i|X_i|^2|\psi|^2=0.$$
		Since $|\psi|^2=V\geq1$ and at each point $p\in\Sigma$ we can choose a basis $(X_1,\ldots,X_n)$ of $T_p\Sigma$ such that $X_i$
		is a direction of principal curvature, we get $\lambda_i=H$ on $\Sigma$ for all $i\in\{1,\ldots,n\}$, so 
		$$A=H\,{\rm Id}.$$
		Thus, $\Sigma$ is totally umbilical. 
	\end{proof}
	
	\section{Proof of Corollary~\ref{main2}}
	\begin{proof} 
		Let $\psi$ be an imaginary Killing spinor with Killing number $i/2$ (after a rescaling of the metric). Thus, for each $X\in\Gamma(TM)$ we have
		\begin{equation}\label{killing}
		\nabla_X\psi=\frac{i}{2}\gamma(X)\psi.
		\end{equation}
		Setting $V=|\psi|^2$, one can check from (\ref{killing}) that $V$ satisfies
		\begin{equation}\label{hess}
		{\rm Hess}\,V=V\langle\ ,\ \rangle.
		\end{equation}
		Thus, tracing (\ref{hess}), follows
		
		$$\Delta V=(n+1)V.$$
		Integrating that equation on the compact domain $\Omega$ and applying the divergence theorem, from (\ref{ineq1}) we obtain (\ref{ineq2}).
		
		Now, if the equality holds in (\ref{ineq2}), by Theorem \ref{main}, $\Sigma$ must be totally umbilical.
		
		In particular, when $P=\mathbb{R}^n$, the manifold $M$ is isometric to the hyperbolic space $\mathbb{H}^{n+1}$. Thus, $\Sigma$ is a totally umbilical hypersurface of $\mathbb{H}^{n+1}$ and so it is a geodesic sphere.
		
	\end{proof}
	
	\section{proof of corollary \ref{main3}}
	We begin this section remembering to the reader some facts about the geometry of hypersurfaces in Riemannian manifolds. On a given hypersurface $\Sigma$ in $M$, we define the $k$-th mean curvature function
	$$H_k=H_k(\Lambda)=\frac{1}{\binom{n}{k}}\sigma_k(\Lambda),$$
	where $\Lambda=(\lambda_1,\cdots,\lambda_n)$ are the principal curvature functions on $\Sigma$ and the homogeneous polynomial $\sigma_k$ of degree $k$ is the $k$-th elementary symmetric function
	$$\sigma_k(\Lambda)=\sum_{i_1<\cdots <i_k}\lambda_{i_1}\cdots\lambda_{i_k}.$$
	The next proposition gives an relation between these curvatures:
	\begin{Prop}(See \cite{garding,montiel2})
		Let $x:\Sigma\to M$ an isometric immersion between two Riemannian manifolds of dimension $n$ and $(n+1)$ respectively, and assume $\Sigma$ is connected. We suppose that there is a point of $\Sigma$ where all principal curvatures are positive. Then, if there exists $k\in\{1,\ldots, n\}$ such that $H_k>0$ on $\Sigma$, then
		\begin{equation}\label{garding}
		H\geq H_2^{1/2}\geq\cdots\geq H_r^{1/r}\quad{\rm on}\ \Sigma.
		\end{equation}
		If $k\geq 2$, equality holds only at umbilical points.
	\end{Prop}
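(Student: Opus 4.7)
The plan is to reduce the statement to the classical Newton--MacLaurin inequalities on Garding's cone, combined with a connectedness argument on $\Sigma$. Recall the Garding cone of order $k$,
$$\Gamma_k = \{\lambda \in \mathbb{R}^n : \sigma_j(\lambda) > 0 \text{ for } j = 1, \ldots, k\}.$$
On $\Gamma_k$ the Newton--MacLaurin inequalities $H_1 \geq H_2^{1/2} \geq \cdots \geq H_k^{1/k}$ hold pointwise, and for $k \geq 2$ equality anywhere in this chain forces all entries of $\lambda$ to coincide. This is precisely the symmetric-function content invoked from \cite{garding,montiel2}; it ultimately follows from Garding's concavity theorem for $\sigma_k^{1/k}$ on $\Gamma_k$, so I would cite it rather than reprove it.

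The main task is therefore to show that the vector $\Lambda(p) = (\lambda_1(p), \ldots, \lambda_n(p))$ of principal curvatures at every $p \in \Sigma$ lies in $\Gamma_k$. I would argue this by connectedness. Let $U = \{p \in \Sigma : \Lambda(p) \in \Gamma_k\}$. By hypothesis there is a point $p_0 \in \Sigma$ with $\lambda_i(p_0) > 0$ for all $i$, and the positive orthant is contained in $\Gamma_k$, so $p_0 \in U$ and $U \neq \emptyset$. The set $U$ is open because each $\sigma_j\circ\Lambda$ is continuous. For closedness, use the structural fact that $\Gamma_k$ is the connected component of the open set $\{\sigma_k > 0\}$ containing $(1,\ldots,1)$, which implies $\overline{\Gamma_k} \cap \{\sigma_k > 0\} = \Gamma_k$. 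If $p_m \to p$ with $p_m \in U$, then $\Lambda(p) \in \overline{\Gamma_k}$, and $\sigma_k(\Lambda(p)) = \binom{n}{k} H_k(p) > 0$ by the standing hypothesis on $H_k$; hence $\Lambda(p) \in \Gamma_k$. So $U$ is clopen, and the connectedness of $\Sigma$ forces $U = \Sigma$.

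Granted $\Lambda(p) \in \Gamma_k$ for every $p \in \Sigma$, the inequalities (\ref{garding}) now follow by applying Newton--MacLaurin pointwise. The rigidity clause is also pointwise: for $k \geq 2$, equality in any link of the chain at a point $p$ forces $\lambda_1(p) = \cdots = \lambda_n(p)$ by the sharp form of Newton--MacLaurin, so $p$ is umbilical.

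The main obstacle is the $\Gamma_k$-containment step, which hinges on identifying $\Gamma_k$ as a connected component of $\{\sigma_k > 0\}$ so that the limit of points with $\Lambda \in \Gamma_k$ cannot escape into a different component while $\sigma_k\circ\Lambda$ remains strictly positive. Once this structural input from the theory of hyperbolic polynomials is admitted, the rest of the argument is just continuity and the standard symmetric-function inequalities.
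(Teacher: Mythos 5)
The paper states this proposition without proof, deferring to \cite{garding,montiel2}, and your argument is precisely the standard one from those sources: identify $\Gamma_k$ as the connected component of $\{\sigma_k>0\}$ containing $(1,\dots,1)$, use the point with positive principal curvatures plus connectedness of $\Sigma$ to show $\Lambda(p)\in\Gamma_k$ everywhere, and then apply the Newton--MacLaurin inequalities with their equality case pointwise. The proposal is correct (note only that the symmetric functions $\sigma_j\circ\Lambda$, not the individual $\lambda_i$, are what vary continuously, which is exactly how you phrased the openness/closedness of $U$), so nothing further is needed.
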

	
	Now, if $\nabla$ denotes the Levi-Civita connection on $M$ and $N$ the unit normal vector field along $\Sigma$ which points to the inner region, we define the shape operator $A$ by $A(X)=-\nabla_XN$.
	Thus, the classical Newton transformations $T_k:\Gamma(T\Sigma)\to\Gamma(T\Sigma)$ are defined inductively from $A$ by:
	$$T_0=I,\quad{\rm and}\quad T_k=\sigma_kI-AT_{k-1},\quad 1\leq k\leq n,$$
	where $I$ denotes the identity in $\Gamma(T\Sigma)$.
	
	Associated to each Newton transformation $T_k$ one has the second order linear differential operator $L_k:\mathcal{C}^\infty(\Sigma)\to\mathcal{C}^\infty(\Sigma)$ for $k=0,1,\ldots,n-1$, given by
	$$L_k(u)=\textrm{tr}(T_k\circ\textrm{Hess }u),$$
	where ${\rm Hess} u:\Gamma(T\Sigma)\to\Gamma(T\Sigma)$ denotes the symmetric operator defined by
	$${\rm Hess} u(X)=\nabla^\Sigma_X\nabla^\Sigma u,\quad \forall X\in\Gamma(T\Sigma).$$
	In particular $L_0=\Delta$ is the Laplace-Beltrami operator. while $L_1$ is the operator $\square$, introduced by Cheng and Yau \cite{cheng-yau} for the study of hypersurfaces with constant scalar curvature.
	
	On the other hand, the divergent of $T_k$ is defined by
	$${\rm div}_\Sigma T_k=\sum_{i=1}^n\left(\nabla^{\Sigma}_{e_i}T_k\right)(e_i),$$
	where $\{e_1,\cdots,e_n\}$ is a local orthonormal frame on $\Sigma$. Thus, we have
	\begin{equation}\label{Lr}
	L_k(u)=\textrm{div}_\Sigma(T_k(\nabla^\Sigma u))-\langle\textrm{div}_\Sigma T_k,\nabla^\Sigma u\rangle.
	\end{equation}
	From (\ref{Lr}), we conclude that the operator $L_k$ is elliptic if, and only if, $T_k$ is positive definite. Clearly, $L_0=\Delta$ is always elliptic. The ellipticity of $L_1$ is guaranteed by Lemma 3.10 of \cite{elbert} when $H_2>0$.
	
	If the ambient space $M$ is equipped with a conformal vector field $Y\in\mathcal{X}(M)$, with conformal function $f$, then is shown in \cite{alias-lira} that
	\begin{equation}\label{div_Newton}
	{\rm div}_{\Sigma}(T_kY^{\top})=\langle{\rm div}_\Sigma T_k,Y\rangle+c_k\left(fH_k+\langle Y,N\rangle H_{k+1}\right),
	\end{equation}
	where 
	$$c_k=(k+1)\binom{n}{k+1}.$$
	Integrating (\ref{div_Newton}) over $\Sigma$ and making use of Divergence theorem we obtain
	\begin{equation}\label{mink}
	\int_\Sigma\langle{\rm div}_\Sigma T_k,Y\rangle\,d\Sigma+c_k\int_\Sigma\left(fH_k+\langle Y,N\rangle H_{k+1}\right)\,d\Sigma=0.
	\end{equation}
	
	A useful formula is obtained in \cite{alias-lira} for every tangent field $X\in\Gamma(T\Sigma)$:
	\begin{equation}\label{div_curv}
	\langle{\rm div}_\Sigma T_k,X\rangle=\sum_{j=1}^k\sum_{i=1}^n\langle R(N,T_{k-j}e_i)e_i,A^{j-1}X\rangle.
	\end{equation}
	In particular, when the ambient space $M$ has constant curvature, then $\langle R(N,V)W,Z\rangle=0$ for every tangent vector fields $V,W,Z\in\Gamma(T\Sigma)$, from (\ref{div_curv}) and (\ref{div_Newton}) we obtain the classical Minkowski integral identity for spaces with constant curvature:
	
	$$\int_\Sigma\left(fH_k+\langle Y,N\rangle H_{k+1}\right)\,d\Sigma=0.$$
	
	By other hand, when the ambient space is an Einstein manifold, taking (\ref{div_curv}) with $k=1$ we get
	$$\langle{\rm div}_\Sigma T_1,X\rangle=Ric(N,X)=0.$$
	Thus, for a compact hypersurface in a Einstein space the following is valid:
	\begin{equation}\label{mink2}
	\int_\Sigma fH_1\,d\Sigma+\int_\Sigma\langle Y,N\rangle H_{2}\,d\Sigma=0.
	\end{equation}
	
	Since every Riemannian spin manifold admitting a imaginary Killing vector is a Einstein manifold with Ricci curvature $-n$, and taking $Y=\nabla V$, where $V=|\psi|^2$, we have from (\ref{mink2}):
	\begin{equation}\label{mink3}
	\int_\Sigma VH_1\,d\Sigma+\int_\Sigma\langle \nabla V,N\rangle H_{2}\,d\Sigma=0.
	\end{equation}
	
	\begin{proof}[ Proof of Corollary \ref{main3}]
		The scalar curvature $S^{\Sigma}$ of a hypersurface can be related with the scalar curvature $S$ of the ambient space by the following formula:
		$$S^{\Sigma}=S-2Ric(N,N)+n(n-1)H_2.$$
		In our case, we have
		$$S^{\Sigma}=n(n-1)(H_2-1).$$
		Thus, the hypothesis of constant scalar curvature is equivalent to $H_2$ constant.
		
		Now, it is easy verify that, with respect to the normal $-\frac{\partial}{\partial t}$, the slices $\Sigma_s=\{s\}\times P$ are totally umbilical hypersurfaces with constant principal curvatures equal 1.
		
		Since $\Sigma$ is compact, there is an point $p\in\Sigma$ such that all principal curvatures of $\Sigma$ are bounded from below by 1 (this can be done by choosing a point $p$ where the projetion onto the real line is maximum), thus the constant $H_2$ is bounded from below by 1, and so is $H$ by (\ref{garding}). 
		
		First, consider the case where $\Sigma$ bounds a compact domain. For this case, the following lemma will be necessary:
		\begin{Lemma}\label{lemma2} If $H_2$ is constant, we have
			
			$$\int_\Sigma \left(\sqrt{H_2}-H\right)\langle\nabla V,N\rangle\,d\Sigma\leq0.$$
			Equality holds if and only if $\Sigma$ is totally umbilical.
		\end{Lemma}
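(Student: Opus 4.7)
The plan is to reduce the lemma to an algebraic identity and then invoke G\aa rding's inequality~(\ref{garding}). The crucial observation is that, as derived in the proof of Corollary~\ref{main2}, the function $V$ satisfies $\mathrm{Hess}\,V = V\langle\ ,\ \rangle$, so that $\nabla V$ is a conformal vector field on $M$ with conformal factor $V$ itself. This allows the Minkowski-type formulas (\ref{mink}) and (\ref{mink2}) to be invoked with the choices $Y = \nabla V$ and $f = V$.

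I would then apply (\ref{mink}) for two values of $k$. For $k = 0$, one has $T_0 = \mathrm{Id}$, hence $\mathrm{div}_\Sigma T_0 = 0$, and (\ref{mink}) collapses to
\begin{equation*}
\int_\Sigma V\, d\Sigma + \int_\Sigma H\langle \nabla V, N\rangle\, d\Sigma = 0.
\end{equation*}
For $k = 1$, the ambient manifold is Einstein, so (\ref{mink2}) applies; using that $H_2$ is constant, it becomes
\begin{equation*}
\int_\Sigma V H\, d\Sigma + H_2 \int_\Sigma \langle \nabla V, N\rangle\, d\Sigma = 0.
\end{equation*}
These two identities yield closed expressions for both $\int_\Sigma \langle \nabla V, N\rangle\, d\Sigma$ and $\int_\Sigma H\langle \nabla V, N\rangle\, d\Sigma$ in terms of $\int_\Sigma V\, d\Sigma$ and $\int_\Sigma VH\, d\Sigma$.

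Substituting these two expressions into the expansion $\sqrt{H_2}\int_\Sigma \langle \nabla V, N\rangle\, d\Sigma - \int_\Sigma H\langle \nabla V, N\rangle\, d\Sigma$ and simplifying reduces the left-hand side of the lemma to
\begin{equation*}
\int_\Sigma (\sqrt{H_2} - H)\langle \nabla V, N\rangle\, d\Sigma = -\frac{1}{\sqrt{H_2}}\int_\Sigma V(H - \sqrt{H_2})\, d\Sigma.
\end{equation*}
Since $\Sigma$ contains a point at which all principal curvatures are positive (as noted just before the lemma), G\aa rding's inequality (\ref{garding}) gives $H \geq \sqrt{H_2}$ pointwise on $\Sigma$. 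Because $V > 0$, the right-hand side is non-positive, which proves the inequality. For the equality clause, the equality case of (\ref{garding}) with $k = 2$ forces $H \equiv \sqrt{H_2}$, hence that $\Sigma$ is totally umbilical.

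The main obstacle is conceptual rather than computational: one must recognize that the $k=0$ and $k=1$ Minkowski formulas for the conformal field $\nabla V$ supplied by the imaginary Killing spinor provide exactly the two linear relations needed to collapse $\int_\Sigma (\sqrt{H_2} - H)\langle \nabla V, N\rangle\, d\Sigma$ into an expression of definite sign. Once these identities are identified, the rest is a short algebraic manipulation together with G\aa rding's inequality.
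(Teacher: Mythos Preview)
Your proof is correct and follows essentially the same route as the paper: both arguments combine the $k=0$ and $k=1$ Minkowski formulas for the conformal field $\nabla V$ with G\aa rding's inequality $H\ge\sqrt{H_2}$. Your version is slightly more explicit, collapsing everything into the closed identity $\int_\Sigma(\sqrt{H_2}-H)\langle\nabla V,N\rangle\,d\Sigma = -\frac{1}{\sqrt{H_2}}\int_\Sigma V(H-\sqrt{H_2})\,d\Sigma$, whereas the paper presents the same computation as a chain of inequalities.
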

		\begin{proof}
			From (\ref{garding}) we have
			$$\int_\Sigma VH  \,d\Sigma\geq\int_\Sigma   V\sqrt{H_2}\,d\Sigma=\sqrt{H_2}\int_\Sigma V\,d\Sigma.$$
			By (\ref{mink3}), we get
			$$-\int_\Sigma   H_2\langle\nabla V,N\rangle\,d\Sigma\geq\sqrt{H_2}\int_\Sigma V\,d\Sigma.$$
			Thus,
			$$-\int_\Sigma   H_2\langle\nabla V,N\rangle\,d\Sigma\geq\sqrt{H_2}\int_\Sigma \langle\nabla V,N\rangle H\,d\Sigma.$$
			Finally, we obtain
			$$\int_\Sigma \left(\sqrt{H_2}-H\right)\langle\nabla V,N\rangle\,d\Sigma\leq0,$$
			with equality if and only if $H=\sqrt{H_2}$ on $\Sigma$. This is equivalent to $\Sigma$ being totally umbilical.
		\end{proof}

		Let $\psi$ be an imaginary Killing spinor field with spinor number $i/2$ and define $\varphi:=(\sqrt{H_2}+i\gamma(N))\psi$ on $\Sigma$. First, by (\ref{ineq1}) and (\ref{lemma}), we get
		\begin{align*}
		\int_\Sigma\langle i\gamma(N)\psi,\varphi\rangle\,d\Sigma&=\sqrt{H_2}\int_\Sigma \langle\nabla V,N\rangle\,d\Sigma+\int_\Sigma V\,d\Sigma\\
		&\geq-\int_\Sigma\frac{\sqrt{H_2}}{H}V\,d\Sigma+\int_\Sigma V\,d\Sigma\\
		&=\int_\Sigma\left(1-\frac{\sqrt{H_2}}{H}\right)V\,d\Sigma\geq0.
		\end{align*}
		By other hand, Lemma \ref{lemma2} yields
		\begin{align*}
		\int_\Sigma\langle i\gamma(N)\psi,\varphi\rangle\,d\Sigma&=\sqrt{H_2}\int_\Sigma \langle\nabla V,N\rangle\,d\Sigma+\int_\Sigma V\,d\Sigma\\
		&=\int_{\Sigma}\sqrt{H_2}\langle\nabla V,N\rangle\,d\Sigma-\int_\Sigma\langle\nabla V,N\rangle H\,d\Sigma\\
		&=\int_\Sigma\left(\sqrt{H_2}-H\right)\langle\nabla V,N\rangle\,d\Sigma\leq0.
		\end{align*}
		Thus, we have
		$$ \int_\Sigma\langle i\gamma(N)\psi,\varphi\rangle\,d\Sigma=0.$$
		Thus, we should have equality in Lemma \ref{lemma2}, so $\Sigma$ is totally umbilical.
		
		Now, remains to think in the case where $\Sigma$ is compact but, is not the boundary of any compact domain. Define the height function $h\in\mathcal{C}^\infty(\Sigma)$ by setting $h=\pi_\mathbb{R}\circ f$, where $f:\Sigma\to\mathbb{R}\times_{\textrm{exp}}P$ is the isometric immersion ($h$ is nothing but the projection onto the real line). Since $\Sigma$ is compact there exists $p,q\in\Sigma$ such that $h$ attains his maximum and the minimum, respectively. If $h(q)=s_1$ and $h(p)=s_2$, then $\Sigma$ is contained in the region $\Omega_{s_1,s_2}$ bounded by the slices $\{s_1\}\times P$ and $\{s_2\}\times P$. As we have mentioned previously, the slices have constant principal curvatures equal 1, thus the second mean curvature of $\Sigma$ satisfies $H_2(p)\geq1$ and $H_2(q)\leq1$, hence $H_2\equiv1$.
		
		Now, choosing $u=e^h$ in (\ref{Lr}) and remembering that $\mathbb{R}\times_{\textrm{exp}}P$ is Einstein, by (\ref{div_curv}) we can obtain
		
		$$L_1(e^h)=n(n-1)e^h(H+\langle N,\partial_t\rangle H_2).$$
		But $H_2\equiv1$, $H\geq\sqrt{H_2}=1$ and Cauchy-Schwarz implies that $H+\langle N,\partial_t\rangle H_2\geq0$. 
		
		Hence, $L_1(e^h)\geq0$ on the compact manifold $\Sigma$. Thus, since in this case $L_1$ is elliptic , by the maximum principle applied to $L_1$ we conclude that $e^h$ is constant, and hence $h$ is constant, this implies that $\Sigma$ is a slice. 
		
		Thus, in both cases, $\Sigma$ is a totally umbilical hypersurface with constant $H_2$ curvature, this implies constant mean curvature. Now, applying lemma 4 of \cite{montiel}, where umbilical hypersurfaces with constant mean curvatures are classified. Thus, the result follows. 
		
	\end{proof}

	%\nocite{hor2,brendle}%shows the no cited references.
	%\bibliographystyle{abbrv}
	%\bibliography{ref}

\end{document}